\def\part#1{\frac{\partial\phantom{q}}{\partial#1}}
\newenvironment{remark}{\begin{trivlist}\item[]{\bf Remark:} }
{\end{trivlist}}
\newtheorem{thm}{Theorem}
\newtheorem{prp}[thm]{Proposition}
\newtheorem{corollary}[thm]{Corollary}
\def\End{\mathop{\rm End}\nolimits}
\def\Jac{\mathop{\rm Jac}\nolimits}
\def\rk{\mathop{\rm rk}\nolimits}
\def\deg{\mathop{\rm deg}\nolimits}
\def\tr{\mathop{\rm tr}\nolimits}
\def\Pic{\mathop{\rm Pic}\nolimits}
\newcommand{\C}{\mathbf{C}}
\newcommand{\Z}{\mathbf {Z}}
\newcommand{\CP}{{\mathbf C}{\rm P}}
\newcommand{\PP}{{\rm P}}
\newcommand{\OO}{{\mathcal{O}}}
\begin{document}



\title{Multiplicity algebras for rank 2 bundles on curves of  small genus} 
\author{Nigel Hitchin}

\maketitle

\begin{abstract}
In \cite{H1} Hausel introduced a commutative  algebra -- the {\it multiplicity algebra} -- associated to a fixed point of the $\C^*$-action on the Higgs bundle moduli space. Here we describe this algebra for a fixed point consisting of a very stable rank 2 vector bundle and zero Higgs field for a curve of low genus. Geometrically, the relations in the algebra are described by a family of quadrics and we focus on the discriminant of this family, providing  a new viewpoint on the moduli space of stable bundles. The discriminant  in our examples demonstrates that as the bundle varies, we obtain a   continuous variation in the isomorphism class of the algebra.
\end{abstract}

\vskip .5cm
\centerline{\it{Dedicated to Oscar Garc\'ia-Prada on the occasion of his 60th birthday} }

\section{Introduction}	
The integrable system introduced in \cite{Hit2} is now 35 years old but there are still unexplored features. A new issue arose in the recent paper \cite{HH} with  Tam\'as Hausel and work which followed on from it \cite{H1}.  It concerns fixed points $m$ of the $\C^*$-action on the moduli space ${\mathcal M}$ of Higgs bundles (these fixed point sets form a subject on which Oscar Garc\'ia-Prada has in particular made fundamental contributions   e.g.\cite{GP1},\cite{GP2},\cite{GP3}). Such a point is given by a Higgs bundle $(E,\Phi)$ where $\Phi$ is necessarily nilpotent and so the functions $h=(h_1,\dots,h_n)$ defining the integrable system all vanish. Then one defines from the $h_i$ a  commutative algebra associated to the fixed point $m$, which is called the {\it multiplicity algebra}  \cite{H1}. When the fixed point is {\it very stable}, the algebra is finite-dimensional and the dimension gives the multiplicity of the component of the nilpotent cone  containing $m$, hence the name.  An interesting observation is that 
for certain fixed points 
at the upper end of the nilpotent cone the multiplicity algebras are isomorphic to  the cohomology of  homogeneous spaces and it is this which suggests further research into their structure in more generality. 

 This paper is about the algebras defined at the {\it lower} end of the nilpotent cone, namely the case of a very stable bundle, where $\Phi=0$. Here we  learn about  the algebra by studying examples, but some underlying structure remains in general: in all cases the $\C^*$-action defines a grading and yields a Poincar\'e duality ring -- a nondegenerate pairing on  subspaces of complementary degree. 

When $E$ has rank $2$ and fixed determinant the relations for the algebra are given by $3g-3$ homogeneous quadratic functions on a vector space of dimension $3g-3$, more invariantly given by 
the quadratic map $\tr \Phi^2: H^0(C,\End_0 E\otimes K)\rightarrow H^0(C,K^2)$. Considering $H^0(C,\End_0 E\otimes K)$ as the cotangent space at the point $[E]$ of the moduli space ${\mathcal N}$ of stable bundles, this is the definition of the integrable system. 

The particular issue we address here is to exhibit algebras which, up to equivalence, contain continuous parameters unlike the integral structure of cohomology. We use the discriminant: a class $\alpha \in H^1(C,K^*)$ defines a homomorphism $\alpha: H^0(C,\End_0 E\otimes K)\rightarrow H^1(C,\End_0 E)$ which by Serre duality is equivalent to the quadratic map above. Then $\det \alpha=0$ describes a  hypersurface of degree $3g-3$ in the projective space $\PP(H^1(C,K^*))$ and the projective equivalence class of this is an invariant of the algebra, independent of any specific choice of generators and relations. In the terminology of classical algebraic geometry we have a family of quadrics and the discriminant is a familiar invariant. 

In genus $g=2$ we have three quadratic functions on a 3-dimensional vector space which geometrically is a net of conics. In \cite{H} we showed that if the degree of $E$ is odd then there are just finitely many equivalence classes of algebras, the very stable ones  having three generators $\xi_i$ with $\xi_i^2=0$. This is a case where the algebra is  indeed isomorphic to  the cohomology  $H^*((\CP^1)^3,\C)$.  In contrast we show in Section \ref{two}  that when $E$ has  even degree  the discriminant is a  cubic curve isomorphic to   a plane section of a singular cubic surface,  and the modulus of this elliptic curve varies as $E$ varies in the moduli space of stable bundles. To do this we make  essential use of the description in \cite{vGP}  of the integrable system using a classical relationship between certain curves of genus 2,3 and 5. The discriminant approach also gives a new viewpoint on the moduli space:  the space of discriminants can be seen as the projection from a point on the Igusa quartic threefold, the quotient of the moduli space $\PP^3$ of stable bundles (as described in the foundational paper \cite{Nar0}) by the group $H^1(C,\Z_2)$. 

The geometry of these same three curves comes into play when we consider in Section \ref{three}  a family of bundles of degree zero on a general nonhyperelliptic curve of genus $3$. The moduli space ${\mathcal N}$ for $\Lambda^2E\cong \OO$ is well-known  \cite{Nar1} to be a special singular quartic hypersurface in $\PP^7$, initially studied by A.Coble. We consider a two-dimensional family by choosing a line bundle $U$ of order $2$  and looking at the fixed point set in ${\mathcal N}$ under the action $E\mapsto E\otimes U$. This is a quartic surface in $\PP^3\subset \PP^7$, in fact the Kummer surface of the Jacobian for a genus $2$ curve, part of the story of the previous section. 

The results of Pauly \cite{P}  allow us to write down  the six relations for the algebra and to determine the discriminant variety more geometrically. We find that for our chosen family the degree 6 hypersurface is reducible to a singular quadric and a quartic. The degeneracy subspace of the quadric is a projective plane 
and its intersection with the quartic hypersurface is a quartic curve. It is the modulus of this curve which varies as we vary $E$ in the family, but we use a genericity argument to show this rather than using the explicit formula. The geometry behind this introduces a quartic surface  with 10 nodes associated to a pair of conics. 

The original motivation for this study in \cite{HH} involves mirror symmetry for the Higgs bundle moduli space. The precise role of the multiplicity algebra in this context has yet to be determined. 

\section{The quadratic map}
Let $C$ be a curve of genus $g>1$, $E$ a rank $2$ vector bundle over $C$ with $\Lambda^2 E$ fixed, and let $\End_0E$ denote the bundle of trace zero endomorphisms of $E$. For $\Phi\in H^0(C, \End_0 E\otimes K)$ we consider $\tr \Phi^2\in H^0(C,K^2)$. When $E$ is stable, then the dimension of both these spaces is $3g-3$ and we consider the map
 $$\tr \Phi^2:  H^0(C, \End_0 E\otimes K)\rightarrow  H^0(C,K^2)$$
which commutes with the scalar action of $\lambda\in \C^*$ on $\Phi$ and $\lambda^2$ on the right hand side. As $[E]$ varies over the moduli space 
${\mathcal N}$ of stable bundles, linear functions on $H^0(C,K^2)$ generate the integrable system of \cite{Hit2}. 

We can view the situation in different ways, one is to regard it as a family of quadrics in $\PP^{3g-4}\cong \PP(H^0(C, \End_0 E\otimes K))$ parametrized by $\PP^{3g-4}\cong \PP(H^1(C,K^*))$, evaluating $\alpha\in H^1(C,K^*)=H^0(C,K^2)^*$ on $\tr\Phi^2$. Thus in genus $2$ it is  a net of conics using the classical term. The locus of singular quadrics is a hypersurface, the {\it discriminant},  in   $\PP(H^1(C,K^*))$.

A bundle $E$ is called {\it very stable} \cite{Laum} if there is no nonzero nilpotent Higgs field $\Phi\in H^0(C, \End_0 E\otimes K)$. In rank 2 nilpotency is equivalent to $\tr \Phi^2=0$, or a non-empty base locus for the  family of quadrics. The very stable bundles are important, not only because they constitute the generic situation, but  also because then the quadratic map is proper \cite{PPN}, a crucial feature in the more general setting of \cite{HH}. 

One further point -- our question just involves the endomorphism bundle and not $E$ itself and so is insensitive to the action $E\mapsto E\otimes L$ where $L$ is a line bundle (with $L^2$ trivial if we want to fix $\Lambda^2E$). Equivalently, we are only concerned with the projective bundle $\PP(E)$.

\section{Genus two curves} \label{two} 
\subsection{Odd degree}
Let $C$ be a smooth curve of genus $2$, then Atiyah \cite{At} described the projective bundle associated to an indecomposable rank $2$ vector bundle $E$ of odd degree.  The construction represents the projective bundle by a vector bundle which is an 
 extension ${\mathcal O}\rightarrow E\rightarrow L$ where $\deg L=1$. 

The extension class $[\alpha]$ lies in  $H^1(C,L^*)$  and since $\dim H^1(C,L^*)=2$ it is uniquely determined up to a multiple by its annihilator, a section  $s\in H^0(C,KL)$ with $[\alpha] s=0\in H^1(C,K)$. Then $s$ has divisor 
$p+q+r$. 

Now $E$ has 4 degree zero subbundles, and the four divisors are 
$$ p+q+r,\quad p+\sigma(q)+\sigma(r),\quad  q+\sigma(r)+\sigma(p),\quad r+\sigma(p)+\sigma(q)$$ 
where 
 $\sigma:C\rightarrow C$ is the hyperelliptic involution.  Let $\pi:C\rightarrow \PP^1$ denote the quotient map, then $(\pi(p),\pi(q),\pi(r))=(a_1,a_2,a_3)$ and out of the eight inverse images, the four above give the same projective bundle. Hence we have Atiyah's  description of the moduli space as a branched double covering of the symmetric product $S^3\PP^1$. This symmetric product may be considered as $\PP(V)$ where $V$ is the 4-dimensional space of cubic polynomials $p(x)$ with roots $a_1,a_2,a_3$. If $x_1,\dots, x_6\in \PP^1$ are the images of the fixed points of $\sigma$  (so that the curve has equation $y^2=(x-x_1)\dots(x-x_6)$) then the branch locus consists of the six planes $p(x_i)=0$.

In \cite{H} we calculated the three-dimensional space of  sections of $\End_0 E\otimes K$ in terms of a $C^{\infty}$ splitting of the extension: writing $E=\OO\oplus L$ with $\bar\partial$-operator $\bar\partial_E(f,g)=\bar\partial f +\alpha g$ where $\alpha\in \Omega^{0,1}(C, L^*)$ represents the extension class. The condition $[\alpha]s=0$ means we can write $\alpha=\bar\partial u/s$ where $u$ is supported in a neighbourhood of the zeros $p,q,r$ of $s$ and in fact can be chosen to vanish at $q,r$.

In  concrete terms the space of Higgs fields has basis
$$(x-a_2)\begin{pmatrix} 1 &    \frac{2}{s}(u-u_1\frac{(x-a_3)}{(a_1-a_3)})\cr
0 &   -1\end{pmatrix}\frac{dx}{y},\quad
(x-a_3)\begin{pmatrix}1 &  \frac{2}{s}(u-u_1\frac{(x-a_2)}{(a_1-a_2)})\cr
0 &  -1\end{pmatrix}\frac{dx}{y}$$
$$\begin{pmatrix}-u &   \frac{1}{s}(-u^2+u_1^2\frac{(x-a_2)(x-a_3)}{(a_1-a_2)(a_1-a_3)}\cr
                               s &    u\end{pmatrix}\frac{dx}{y} $$ 
                               where $u_1=u(p)$.
\begin{remark}
Consider the map from $\OO\oplus K^*$ to $E$ given by $(1,t)\mapsto (1-ut,st)$ where $t$ is a local holomorphic section of $K^*$. Since $\bar\partial (1-ut)+(\bar\partial u/s)st=0$ this is  holomorphic and the quotient sheaf is $\OO_{p+q+r}(L)$. So a Hecke transform on $E$ at the points $p,q,r$ gives the bundle $\OO\oplus K^*$. But the formulas above show that each Higgs field $\Phi$ on $E$ preserves the trivial subbundle at $p,q,r$ and so they transform to the Higgs fields on $\OO\oplus K^*$. It follows that $H^0(C,\End_0E\otimes K)$, considered as a Lagrangian submanifold of the Higgs bundle moduli space ${\mathcal M}$, is a Hecke transform of the Hitchin section. Then the isomorphism of the multiplicity algebra with the cohomology of $(\CP^1)^3$ is a consequence of \cite{H1} Theorem 3.10, as well as the direct calculation in \cite{H}. 
\end{remark}

The quadratic form on $H^0(C,\End_0 E\otimes K)$ with values in $H^0(C,K^2)$ takes the form 
$$f(a_1)\frac{(x-a_2)(x-a_3)}{(a_1-a_2)(a_1-a_3)}\xi_1^2+f(a_2)\frac{(x-a_3)(x-a_1)}{(a_2-a_3)(a_2-a_1)}\xi_2^2+f(a_3)\frac{(x-a_1)(x-a_2)}{(a_3-a_1)(a_3-a_2)}\xi_3^2$$
where $f(x)=(x-x_1)\dots (x-x_6)$ and we identify $H^0(C,K^2)$ with  $\pi^*H^0(\PP^1,\OO(2))$. 

Our main focus in this article is the discriminant, and   since the multiplicity algebra has relations $\xi_1^2=\xi_2^2=\xi_3^2=0$ this  consists of three lines in $\PP^2$. However, introducing the geometry of the curve itself we have three lines in $\PP(H^1(C,K^*))\cong \PP^2$, the dual space of $H^0(C,K^2)$. The linear system $K^2$ maps $C$ to a conic $C_0$ in $\PP(H^1(C,K^*))$ -- evaluating $H^0(C,K^2)$ at a point $p\in C$ is the map.   Considering the coefficients of $\xi_i^2$ in the above formula we see that the three lines form the sides of the triangle with vertices $a_1,a_2,a_3$ on the conic, isomorphic to $\PP^1=\pi(C)$. Thus, apart from the double covering, the discriminant reproduces Atiyah's parametrization.

\subsection{Even degree} \label{even}
Now consider  $E$ a stable bundle over $C$ with $\Lambda^2E$ trivial. The moduli space of (S-equivalence classes of) semi-stable bundles in this case is well-known to be $\PP^3$ \cite{Nar0}, defined by considering the line bundles $L$ of degree $1$ such that $E\otimes L$ has a non-zero section: for a given $E$, $L$ varies in a  $2\Theta$-divisor in $\Pic^1(C)$. There are explicit formulas in \cite{Lor} for the  three quadratic functions: 
\begin{eqnarray*}
h_1&=& rst[\xi_0(u_0^2-1)+\xi_1(u_0u_1+u_2)+\xi_2(u_2u_0+u_1)]^2-\\
&& st[\xi_0(u_0u_1-u_2)+\xi_1(u^2_1+1)+\xi_2(u_1u_2+u_0)]^2+\\
&&4rs(\xi_0u_0+\xi_1u_1)^2-rt[\xi_0(u_0^2+1)+\xi_0(u_0u_1+u_2)+\xi_2(u_2u_0-u_1)]^2\\
h_2&=&t(u_0^2+u_1^2+u_2^2+1)[(\xi_0^2+\xi_1^2+\xi_2^2)+(\xi_0u_0+\xi_1u_1+\xi_2u_2)^2]+\\
&&st(u_0^2-u_1^2+u_2^2-1)[(\xi_0^2-\xi_1^2+\xi_2^2)-(\xi_0u_0+\xi_1u_1+\xi_2u_2)^2]+\\
&&4r(u_0u_2-u_1)[\xi_0\xi_2+(\xi_0u_0+\xi_1u_1+\xi_2u_2)\xi_1]+\\
&&4sr(u_2u_0+u_1)[\xi_2\xi_0-(\xi_0u_0+\xi_1u_1+\xi_2u_2)\xi_1]+\\
&&4s(u_1u_2+u_0)[\xi_1\xi_2-(\xi_0u_0+\xi_1u_1+\xi_2u_2)\xi_0]+\\
&&4rt(u_0u_1+u_2)[\xi_0\xi_1-(\xi_0u_0+\xi_1u_1+\xi_2u_2)\xi_2]\\
h_3&=&s[\xi_0(u_2u_0+u_1)+\xi_1(u_1u_2+u_0)+\xi_2(u_2^2-1)]^2-\\
&&[\xi_0(u_2u_0-u_1)+\xi_1(u_1u_2+u_0)+\xi_2(u_2^2+1)]^2-\\
&&t[\xi_0(u_0u_1+u_2)+\xi_2(u_1u_2-u_0)+\xi_1(u_2^2+1)]^2+4r(\xi_1u_1+\xi_2u_2)^2.
\end{eqnarray*}
Here $(u_0,u_1,u_2)$ are affine coordinates on $\PP^3$, and $(\xi_0,\xi_1,\xi_2)$ the corresponding coordinates on the cotangent space. The genus $2$ curve $C$ is the double cover of $\PP^2$ branched over the six points $0,1,\infty, r,s,t$. Then $E$ is determined by fixing $u_i$ and the relations in the algebra are given by the vanishing of the $h_i$.

It is difficult to draw conclusions about the structure of the multiplicity algebra from this mass of formulae so  here we adopt a more geometric approach, drawing on the paper of  van Geemen and Previato \cite{vGP}, which was the first investigation into explicit formulae for the integrable system. The genus 2/3/5 story below appears in various contexts  (see \cite{Bruin}, \cite{Duc}, \cite{Mas},\cite{Verra}).

If $E\otimes L$ has a non-zero section, then by Riemann-Roch, Serre duality and the isomorphism $E^*\cong E\otimes \Lambda^2E^*\cong E$,  we have $H^0(C,E\otimes KL^*)\ne 0$.  It follows that each $2\Theta$-divisor is symmetric with respect to the involution $L\mapsto KL^*$ on $\Pic^1(C)$. If the divisor is smooth, it is a curve $C_5$ of genus 5 and the involution acts freely with quotient $C_3$ of genus $3$ (we use the notation of \cite{vGP}). This is a plane section of a Kummer quartic surface, and if it is nonsingular then the bundle is certainly very stable, since, as in \cite{PP}, the complement -- the so-called ``wobbly" locus -- consists of  either the strictly semistable bundles or the 16 hyperplanes which meet the Kummer surface in a double conic. 

The double covering is defined by a line bundle $U$ on $C_3$ with $U^2$ trivial and we consider the 2-dimensional space $H^0(C_3,K_3U)$. The tangent space at $[E]\in {\mathcal N}$ is $H^1(C,\End_0E)$ but in terms of the corresponding $2\Theta$-divisor $C_5$ it is identified with a subspace of sections of the normal bundle in $\Pic^1(C)$. The normal bundle  is the canonical bundle so there is a  map from global sections of the tangent bundle of $\Pic^1(C)$ to sections of the normal bundle i.e. $H^1(C,\OO)\rightarrow H^0(C_5,K_5)$. Deformations of $E$ are given by curves $C_5$ which are divisors of the $2\Theta$ line bundle and so are transverse to translations on $\Pic^1(C)$, which is the image of $H^1(C,\OO)$. They are also symmetric with respect to the involution. It follows that $H^1(C,\End_0 E)$ is isomorphic to the even sections of $K_5$ and $H^1(C,\OO)$ to the odd ones. In terms of the genus $3$ curve we have 
\begin{equation} \label{isos}
H^1(C,\End_0 E)\cong H^0(C_3,K_3),\qquad H^1(C,\OO)\cong H^0(C_3,UK_3)
\end{equation}
 and taking duals $H^0(C,\End_0 E\otimes K)\cong H^1(C_3,\OO)$.
 
If $s,t$ form a basis of $H^0(C_3,UK_3)$ then we have sections $q_1=s^2,q_2=st,q_3=t^2$ of $K_3^2$. If $C_3$ is nonhyperelliptic then every quadratic differential $q_i$ is uniquely a quadratic form $Q_i$ in elements of $H^0(C_3,K_3)$. Since $q_1q_3=s^2t^2=(st)^2=q_2^2$ the homogeneous quartic equation $Q_1Q_3-Q_2^2=0$ defines $C_3$ in its canonical embedding: $C_3\subset \PP(H^1(C_3,\OO))$. 

The key result is: 
\begin{prp} \cite{vGP} \label{quad} Under the  isomorphism $H^0(C,\End_0 E\otimes K))\cong H^0(C_3,K)^*$ the net of conics is spanned by  $Q_1,Q_2,Q_3$. 
\end{prp}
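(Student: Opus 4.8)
The plan is to transport every space in the statement to the genus $3$ curve, so that both the net of conics and the span $\langle Q_1,Q_2,Q_3\rangle$ become subspaces of one fixed space. Write $W=H^0(C_3,K_3)$ and $P=H^0(C_3,K_3U)$. First I would record the consequences of \eqref{isos}: Serre duality on $C$ and on $C_3$ turns the first isomorphism there into $H^0(C,\End_0E\otimes K)\cong W^*$ (the identification used in the statement) and the second into $H^0(C,K)\cong P^*$. A conic on $\PP(H^0(C,\End_0E\otimes K))=\PP(W^*)$ is then an element of $S^2W$, so the net of conics is the image of a linear map from the parameter space $H^1(C,K^*)=H^0(C,K^2)^*$ into $S^2W$, obtained by evaluating $\alpha$ on $\tr\Phi^2$. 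In this language the whole Proposition becomes the assertion that this image equals the span of the three products $Q_1=s^2$, $Q_2=st$, $Q_3=t^2$.

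Next I would pin down the two sides by multiplication maps. Since $C$ has genus $2$ its canonical map is the hyperelliptic double cover $C\to\PP^1$, so the multiplication $S^2H^0(C,K)\to H^0(C,K^2)$ is an isomorphism of $3$-dimensional spaces; dualizing and using $H^1(C,\OO)\cong P$ gives a canonical identification of the parameter space $H^1(C,K^*)\cong S^2H^1(C,\OO)\cong S^2P$. On the other side, because $C_3$ is nonhyperelliptic its canonical model is a smooth plane quartic, no conic contains it, and hence the multiplication map $\mu\colon S^2W\to H^0(C_3,K_3^2)$ is an isomorphism of $6$-dimensional spaces; this is exactly the statement in the excerpt that each quadratic differential $q_i$ is a unique quadratic form $Q_i$, and it identifies $\langle Q_1,Q_2,Q_3\rangle$ with the image of the multiplication map $m\colon S^2P\to H^0(C_3,K_3^2)$, $s\otimes t\mapsto st$. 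With these identifications in place both the net of conics and $\langle Q_1,Q_2,Q_3\rangle$ are images of linear maps $S^2P\to S^2W$, and it suffices to prove that these two maps coincide up to a nonzero scalar.

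The heart of the matter is thus to identify the Hitchin quadratic map $\tr\Phi^2$, read through \eqref{isos}, with the multiplication $m$, and this is where I expect the only real difficulty to lie. The conceptual shadow is visible on the $2\Theta$-divisor $C_5$ from which \eqref{isos} originates: the product of sections of $K_5$ respects the $\Z_2$-grading coming from the involution, the product of two odd sections is even, and since $K_5=\pi^*K_3$ for the étale double cover $\pi\colon C_5\to C_3$ defined by $U$ this odd-times-odd multiplication $S^2H^0(C_5,K_5)^-\to H^0(C_5,K_5^2)^+$ is precisely $m$ after the identifications $H^0(C_5,K_5)^-\cong P$ and $H^0(C_5,K_5^2)^+\cong H^0(C_3,K_3^2)$. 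The genuinely hard step is to show that the quadratic form $\tr\Phi^2$ on $C$ corresponds, under these identifications, to exactly this multiplication; everything preceding it is formal linear algebra and Serre duality, whereas matching a Higgs-theoretic quadratic map on $C$ to multiplication of sections on $C_3$ requires the geometric bridge furnished by the $2\Theta$-divisor and the spectral picture. This is the content of the explicit analysis of \cite{vGP} through the classical genus $2/3/5$ correspondence, and I would establish the identity $\tr\Phi^2=c\,m$ by invoking that computation; an intrinsic alternative worth attempting is to compare the two tensors in $H^0(C,K^2)^*\otimes S^2W$ directly, checking that they transform identically under the $H^1(C,\Z_2)$-action and agree on the distinguished contact conics $s^2$ and $t^2$ so as to force proportionality. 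Once $\tr\Phi^2=c\,m$ is known the two images agree, and the net of conics is spanned by $Q_1,Q_2,Q_3$.
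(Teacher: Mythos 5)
The paper does not actually prove this proposition: it is quoted from \cite{vGP}, and the only commentary offered is the pair of remarks that follow it, so the comparison here is between your proposal and those remarks. Your linear-algebra framework is correct and is essentially the paper's Remark~1 made explicit: the parameter space $H^1(C,K^*)=H^0(C,K^2)^*$ is identified with $S^2H^0(C_3,UK_3)\cong S^2H^1(C,\OO)$ via the genus-$2$ multiplication isomorphism $S^2H^0(C,K)\cong H^0(C,K^2)$, and conics on $\PP(H^0(C_3,K_3)^*)$ are identified with quadratic differentials on $C_3$ via the multiplication isomorphism $S^2H^0(C_3,K_3)\cong H^0(C_3,K_3^2)$, valid because $C_3$ is a smooth plane quartic. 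Reducing the proposition to the assertion that the induced map $S^2P\to H^0(C_3,K_3^2)$ is a nonzero multiple of multiplication of sections of $K_3U$ is a clean and correct reformulation, and the injectivity needed to conclude that the image is exactly the $3$-dimensional span of $Q_1,Q_2,Q_3$ is easy.

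However, the step you flag as ``genuinely hard'' is the entire mathematical content of the proposition, and you do not supply it: you invoke the computation of \cite{vGP}, which is the very source being cited for the statement. As a standalone argument the proposal therefore has a gap exactly where you locate it. The missing ingredient is the one the paper's Remark~2 points at: for $L$ in the $2\Theta$-divisor $C_5$, with $s\in H^0(C,E\otimes L)$ and $s'\in H^0(C,E\otimes KL^*)$, the trace-free Higgs field $\Phi=s\otimes s'-\langle s,s'\rangle/2$ satisfies $\tr\Phi^2=$ a multiple of $\langle s,s'\rangle^2$, a perfect square in the image of $S^2H^0(C,K)$. Tracking these distinguished Higgs fields as $L$ varies over $C_5$ is what identifies the quadratic map with multiplication and fixes the scalar; your proposed ``intrinsic alternative'' (equivariance under $H^1(C,\Z_2)$ plus agreement on the contact conics $s^2$ and $t^2$) is only sketched, and evaluating the net on even those two conics already requires the explicit Higgs fields above. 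In short: the reductions are right and the architecture matches the paper's remarks, but the proof of the central identity is absent rather than merely deferred to a routine computation.
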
 
\begin{remark}

\noindent 1. The three quadratic forms correspond to basis elements of $S^2H^0(C_3,UK_3)\cong S^2H^1(C,\OO)$ and since each quadratic differential in genus 2 is a quadratic in sections of $K$, this is $H^1(C,K^*)\cong H^0(C,K^2)^*$. This is the invariant form for the map but it is more convenient from our point of view to think of the multiplicity algebra as the 8-dimensional algebra generated by $1,x,y,z$ with relations   $Q_i(x,y,z)=0$. 

\noindent 2. The essential input for the authors of \cite{vGP} to prove the isomorphisms in (\ref{isos})  involves the section $s$ of $E\otimes L$ and $s'$ of $E\otimes KL^*$. Then $\Phi=s\otimes s'-\langle s, s'\rangle/2$ is a trace zero Higgs field. 

\end{remark}

 The discriminant in $\PP^2=\PP(H^1(C,K^*))$ is the cubic curve $D$ defined by $\det (\sum_{i=1}^3y_iQ_i)=0$.  The curve $C$, as before, is mapped to a conic $C_0\subset \PP(H^1(C,K^*))$ by the linear system $K^2$   with six distinguished points $x_1,\dots, x_6$ the branch points of $\pi:C\rightarrow \PP^1$.  We shall use the following
 \begin{prp}\label{W}
 The six points $x_1,\dots, x_6$  lie on the discriminant. 
 \end{prp}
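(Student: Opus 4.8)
The plan is to detect the discriminant pointwise along the conic $C_0$ by evaluating Higgs fields at points of $C$. A point of $C_0$ is the image under $|K^2|$ of some $p\in C$, namely the functional $\mathrm{ev}_p\in H^0(C,K^2)^*=H^1(C,K^*)$ of evaluation at $p$; the member of the net it names is the quadratic form $\Phi\mapsto \mathrm{ev}_p(\tr\Phi^2)=\tr(\Phi(p)^2)$ on $A:=H^0(C,\End_0 E\otimes K)$. This form factors as $A\xrightarrow{\,e_p\,}(\End_0 E\otimes K)_p\xrightarrow{\,M\mapsto \tr M^2\,}K^2_p$, where $e_p$ is evaluation and the second map is a nondegenerate $K^2_p$-valued quadratic form on the fibre. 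Hence $\det(\sum y_iQ_i)$ vanishes at $[\mathrm{ev}_p]$ exactly when $e_p$ fails to be an isomorphism, that is
\[
p\in D\cap C_0\iff H^0(C,\End_0 E\otimes K(-p))\neq 0 .
\]
So it suffices to produce, for each Weierstrass point $x_i$, a nonzero Higgs field vanishing at $x_i$.

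To do this I would exploit the hyperelliptic involution $\sigma$. First, $\sigma^*E\cong E$: the $2\Theta$-divisor $D_E\subset\Pic^1(C)$ is symmetric under $L\mapsto KL^*$, which is precisely the action of $\sigma$ on $\Pic^1(C)$ (since $L+\sigma^*L\sim K$), whence $D_{\sigma^*E}=D_E$ and injectivity of the $2\Theta$ map gives the isomorphism. Consequently $\End_0 E$ carries a canonical $\sigma$-linearization, the sign ambiguity in a lift $\tilde\sigma$ cancelling in $\mathrm{Ad}\,\tilde\sigma$. Thus $\sigma$ acts on $A=A^+\oplus A^-$ and on each fibre, and $e_{x_i}$ is equivariant. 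At the fixed point $x_i$ the differential of $\sigma$ is $-1$, so $\sigma$ acts as $-1$ on $K_{x_i}$ and as $\mathrm{Ad}(T_i)$ on $(\End_0 E)_{x_i}$, where $T_i=\tilde\sigma|_{x_i}$.

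The technical heart, and the main obstacle, is to pin down these eigenspaces, the delicate point being whether $T_i$ is scalar. I would apply the holomorphic Lefschetz fixed point formula to $\End_0 E\otimes K$, noting that stability gives $H^1(C,\End_0 E\otimes K)\cong H^0(C,\End_0 E)^*=0$, so the left-hand side equals $\dim A^+-\dim A^-$. Each fixed-point denominator is $1-(-1)^{-1}=2$, while the fibre trace is $-\tr\,\mathrm{Ad}(T_i)$, equal to $+1$ when $T_i$ is non-scalar and to $-3$ when $T_i$ is scalar. Because $\det T_i$ is a single global scalar, the $T_i$ are simultaneously scalar or not; the scalar alternative forces $\dim A^+-\dim A^-=-9$, impossible as $\dim A=3$. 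Hence every $T_i$ has eigenvalues $\{+1,-1\}$, each fibre trace is $+1$, and the formula gives $\dim A^+-\dim A^-=3$, so $A=A^+$, while the $(+1)$-eigenspace of $\sigma$ on each fibre $(\End_0 E\otimes K)_{x_i}$ is two-dimensional.

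Finally, equivariance forces $e_{x_i}(A)=e_{x_i}(A^+)$ into this two-dimensional eigenspace, so $e_{x_i}$ cannot be injective; the kernel supplies a nonzero Higgs field vanishing at $x_i$, and by the criterion above $x_i\in D$. Since $\deg D=3$ and $C_0$ is a conic, the six points $x_1,\dots,x_6$ exhaust $D\cap C_0$, so the discriminant cuts out on $C_0$ exactly the branch locus of $\pi$ and thereby recovers the curve $C$.
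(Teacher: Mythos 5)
Your proof is correct, and while it shares the paper's basic reduction -- a point of $C_0$ lies on the discriminant if and only if some nonzero $\Phi\in H^0(C,\End_0E\otimes K)$ vanishes at the corresponding point of $C$ -- you produce that vanishing Higgs field by a genuinely different mechanism. The paper takes the odd theta characteristic $K^{1/2}=\OO(\tilde x_i)$ with its section $s$ vanishing at $\tilde x_i$, invokes the mod $2$ index theorem to get a nonzero $\Psi\in H^0(C,\End_0E\otimes K^{1/2})$ (parity of $h^0$ of an odd-rank orthogonal bundle twisted by a theta characteristic is a topological invariant), and sets $\Phi=s\Psi$. You instead use the hyperelliptic symmetry $\sigma^*E\cong E$ (correctly deduced from the symmetry of the $2\Theta$-divisor and injectivity of the Narasimhan--Ramanan map), linearize $\End_0E\otimes K$, rule out the scalar lift via the global constant $\det T_i$, and run the holomorphic Lefschetz formula to conclude that $H^0(C,\End_0E\otimes K)$ is entirely $+1$-eigenspace while the fibre $+1$-eigenspace at a Weierstrass point is only $2$-dimensional, forcing a kernel of evaluation. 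Both are sound; the trade-off is that the paper's argument is shorter and, as its subsequent remark indicates, is the one that fits the general picture (it needs only an odd theta characteristic and the topological triviality of $\End_0E$, with no special symmetry of $E$), whereas your argument is confined to the hyperelliptic situation where $\sigma^*E\cong E$ but yields the extra structural facts that $\sigma$ acts trivially on the whole space of Higgs fields and that the rank of each quadric at $x_i$ drops to exactly $2$. Your closing observation that the six points exhaust $D\cap C_0$ by B\'ezout is a pleasant bonus not in the paper's proof.
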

 \begin{proof}
  A fixed point $\tilde x_i$ of $\sigma$  is the divisor for the unique section $s$ of a square root $K^{1/2}$ of the canonical bundle. Hence $\dim H^0(C,K^{1/2})$ is odd.  If $E$ is a rank 2 bundle of degree zero, and hence topologically trivial,  then since $\End_0 E$ is odd-dimensional and has an orthogonal structure given by $\tr \phi^2$ it follows from the mod 2 index theorem \cite{AS} that $\dim H^0(C, \End_0 E\otimes K^{1/2})$ is also odd (this is part of a more general story \cite{Ox}) and hence contains a non-zero section $\Psi$. So  $s\Psi\in H^0(C,\End_0 E\otimes K)$ vanishes  at $\tilde x_i$ and hence so does $\tr(s\Psi\Phi)$ for all $\Phi$. This means that evaluation of $H^0(C,K^2)$ at  $\tilde x_i$, i.e.  its image of $x_i$ in $\PP(H^1(C,K^*))$ under the bicanonical map, gives a degenerate quadratic form, and hence lies on the discriminant. \end{proof} 
 
 \begin{remark} For higher genus $\dim H^0(C,\End_0\otimes K)=3g-3> 3=\rk (\End_0\otimes K)$ and so for any point $x\in C$ there exists a Higgs field vanishing at $x$. Consequently the bicanonical image of $C$ always lies in the discriminant hypersurface. 
  \end{remark} 

To describe the discriminant curve for a bundle $E$, from Proposition \ref{W}  we need to understand the cubics through the six points $x_i$ on $C_0$.
 Blow up $\PP^2$ at these points, then  the conic $C_0$ becomes a $-2$ curve. The linear system of cubics through the six points is then equivalent to $3H-E_1-\cdots-E_6$,  where $E_i$ are the exceptional curves and $H$ the class of a line. This  maps the surface  to $S\subset \PP^3$,  a cubic surface with a node from the collapse of the $-2$ curve.  Then any cubic curve through the $x_i$, $i=1,..,6$,  is defined by a plane section of $S$. We shall show next that conversely a generic section arises from a stable bundle $E$.

 \begin{prp} To each genus $2$ curve $C$ we associate a singular cubic surface $S$ as above. Then  a generic plane section is the discriminant of the net of conics defined by a very stable rank 2 bundle $E$ on $C$.
 \end{prp}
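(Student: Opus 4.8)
The plan is to produce a dominant rational map from the moduli space $\mathcal{N}\cong\PP^3$ to the space of plane sections of $S$, sending a bundle to its discriminant, and then to restrict attention to the very stable locus. First I would pin down the target. By Proposition \ref{W} the discriminant $D_E$ attached to any $E$ is a plane cubic through the six points $x_1,\dots,x_6$ on the conic $C_0$, and plane cubics through six points form a three-dimensional linear system; after blowing up the $x_i$ this is precisely the system $3H-E_1-\cdots-E_6$ cutting out $S\subset\PP^3$. Hence discriminants are exactly plane sections of $S$, parametrized by $(\PP^3)^{*}$. Evaluating the van Geemen--Previato forms $Q_1,Q_2,Q_3$ of Proposition \ref{quad} and forming $\det(\sum_i y_iQ_i)$ then defines a rational map $\Psi:\mathcal{N}\dashrightarrow(\PP^3)^{*}$, $[E]\mapsto D_E$.

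Since the discriminant depends only on $\End_0 E$, equivalently on $\PP(E)$, the map $\Psi$ is invariant under the sixteen translations $E\mapsto E\otimes L$ with $L^2\cong\OO$, i.e. under $H^1(C,\Z_2)$, and so descends to the quotient $\mathcal{N}/H^1(C,\Z_2)$, which by the Narasimhan--Ramanan and Coble description \cite{Nar0} is the Igusa quartic threefold in $\PP^4$. The central step is then to prove $\Psi$ is dominant. As $\mathcal{N}$ and $(\PP^3)^{*}$ are both irreducible of dimension $3$, it suffices to show $\Psi$ does not drop dimension. The natural route is to realise the induced map on the Igusa quartic as the projection from a suitable point of $\PP^4$: projecting a quartic threefold from a point is generically finite, hence dominant, and I would check that the identification of coordinates places the centre of projection so that the image is all of $(\PP^3)^{*}$. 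Alternatively one can compute the differential $d\Psi$ at a single well-chosen bundle --- one for which $Q_1,Q_2,Q_3$ take a symmetric normal form --- and verify it is an isomorphism, whence dominance follows by semicontinuity of rank.

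Finally I would invoke very stability. The non-very-stable (wobbly) locus in $\mathcal{N}$ is a proper closed subset: as recalled above, for a $2\Theta$-divisor it consists of the strictly semistable bundles together with the sixteen hyperplanes meeting the Kummer surface in a double conic, following \cite{PP}. Because $\Psi$ is dominant, the image of its very stable locus remains dense, so a generic plane section of $S$ is realised as the discriminant of a very stable $E$. I expect the genuine difficulty to lie in the dominance step: one must show that the variation of the forms $Q_i$ --- equivalently of the associated genus-$3$ curve $C_3$ --- is rich enough to sweep out an open set of plane sections rather than a proper subvariety, and matching the bundle-theoretic family to the classical Igusa-quartic projection is where the real work sits.
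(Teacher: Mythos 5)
Your proposal runs in the opposite direction from the paper's proof, and the step you yourself flag as the real difficulty is exactly the step that is missing. The paper does not define the forward map $[E]\mapsto D_E$ and prove dominance; it \emph{inverts} the construction. Starting from a generic plane section $D$ of $S$ (a smooth cubic through the six points $x_i$), it uses the classical fact (Beauville) that a choice of nontrivial $2$-torsion line bundle on $D$ yields a symmetric determinantal representation $\det(\sum_i y_iQ_i)=0$; the quartic $Q_1Q_3-Q_2^2=0$ is then a canonical genus $3$ curve $C_3$ on which $Q_1=s^2$, $Q_3=t^2$, $Q_2=st$ for sections $s,t$ of $K_3U$ with $U^2$ trivial; the associated unramified double cover $C_5\to C_3$ has Prym isomorphic to $\Jac(C)$ for the genus $2$ curve $y^2=\det(Q_1+2uQ_2+u^2Q_3)$, whose branch points are forced to be the six $x_i$ because $D$ passes through them; and $C_5$ sits as a symmetric $2\Theta$-divisor, hence determines a point of $\mathcal{N}$ whose discriminant is $D$ by Proposition \ref{quad}. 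This explicit inverse is what makes ``generic section is attained'' a theorem rather than a plausibility.

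In your version, neither of the two routes offered for dominance is carried out. The first --- identifying the descended map on $\mathcal{N}/H^1(C,\Z_2)$ with projection of the Igusa quartic from a point --- is essentially circular as stated: the paper obtains that identification as a \emph{consequence} of the proposition (see the remark following it), and to set it up independently you would need precisely the correspondence between bundles and determinantal cubics that the inverse construction supplies. The second route, computing $d\Psi$ at a single well-chosen bundle, would in principle suffice (dominance of a map between irreducible threefolds plus openness of the very stable locus does give the statement), but you have not exhibited such a bundle or performed the computation, and the explicit Hamiltonians of Heu--Loray that one would have to differentiate are exactly the ``mass of formulae'' the paper sets out to avoid. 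So the logical skeleton of your argument is sound, but its load-bearing step is an unproven assertion; to close the gap you would either have to execute the Jacobian computation at a concrete point of $\mathcal{N}$, or import the determinantal-representation/Prym argument, at which point you have reproduced the paper's proof.
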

 \begin{proof}
We start with the classical fact (see \cite{Bea2}) that the choice of a non-trivial line bundle of order $2$ on a nonsingular plane cubic curve $D$ provides an equation of $D$  as $\det (\sum_{i=1}^3 y_iQ_i)=0$ where $Q_1,Q_2,Q_3$ are symmetric $3\times 3$ matrices. Parametrize the conic $C_0\cong \PP^1$ by $(y_1,y_2,y_3)=(1,2u,u^2)$ then the six points of intersection $b_i\in D\cap C_0$ are the roots of $\det (Q_1+2uQ_2+u^2Q_3)=0$.

Each $Q_i$ defines a conic $Q_i(x,y,z)=0$ in a projective plane $\PP^2$ and $Q_1Q_3-Q_2^2=0$ a quartic curve $C_3$ which for generic $D$ will be smooth. Since $\OO(1)$ on $\PP^2$ is the canonical bundle $K_3$ on $C_3$ the equation says that $Q_1$ on $C_3$ is a section of $K^2_3$ with double zeros, that is $s^2$ for a section $s$ of $K_3U$ for some line bundle $U$ with $U^2$ trivial. But on $C_3$ 
$$(uQ_1+vQ_2)^2=u^2Q_1^2+2uvQ_1Q_2+v^2Q_2^2=Q_1(u^2Q_1+2uvQ_2+v^2Q_3)$$
so as $u,v$ vary the bundle $U$ is constant and we have $Q_1=s^2, Q_3=t^2, Q_2=st$ for a basis $s,t$ of sections of $K_3U$. 

The line bundle $U$ defines an unramified covering, a curve $C_5$ of genus $5$ with an involution $\tau$. The Prym variety $\PP(C_5,C_3)$ is a 2-dimensional abelian variety consisting of the divisor classes in $\Pic^0(C_5)$ which are anti-invariant under $\tau$. It has two components and $x-\tau(x)$ embeds $C_5$ (if it is not hyperelliptic) in the non-trivial component. Then (see \cite{Verra} or the other references above) each component is isomorphic to the Jacobian of the genus 2 curve $y^2=\det(Q_1+2uQ_2+u^2Q_3)$ and $C_5$ is a symmetric $2\Theta$-divisor with $C_3$ a plane section of the Kummer surface. This defines a point in the moduli space which, thanks to Proposition \ref{quad}, has the given cubic $D$ as discriminant. 

The isomorphism of the two abelian surfaces can be standardized up to elements of order two \cite{vGP}, for at the branch point $x_i$ the conic is a pair of lines each of which is a bitangent to $C_3$, that is sections $s_1, s_2$ of $K_3^{1/2}, K_3^{1/2}U$. Then pulled back to $C_5$ this  gives  a line bundle $K^{1/2}_5$ with $\dim H^0(C_5, K_5^{1/2})=2$. So $b_i\in \Pic^1(C)$ corresponds to $K_5/2\in \Pic^4(C_5)$ and $K_5/2+a-\tau(a)$ identifies with the Prym variety. Since we are concerned with $\End_0 E\otimes K$, the choice of line bundle of order $2$ is irrelevant
\end{proof} 
\begin{corollary} For a fixed genus $2$ curve, the isomorphism class of the multiplicity algebra for a very stable rank 2  bundle $E$ of even degree varies continuously as $E$ varies. 
\end{corollary}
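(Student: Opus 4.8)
The plan is to deduce the corollary from the preceding proposition together with the fact, noted in the introduction, that the projective equivalence class of the discriminant cubic $D$ is an invariant of the isomorphism class of the multiplicity algebra. If two very stable bundles give isomorphic algebras, their discriminant cubics are projectively equivalent; and since a generic such $D$ is a smooth plane cubic, and two smooth plane cubics are projectively equivalent precisely when they have the same modulus, the two elliptic curves $D$ then share a $j$-invariant. Hence the modulus $j(D)$ is a function of the isomorphism class of the algebra, and it suffices to exhibit a non-constant, continuous variation of $j(D)$ as $E$ varies.

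By the previous proposition the discriminants $D$ arising from very stable bundles $E$ on the fixed curve $C$ are exactly the generic plane sections of the associated singular cubic surface $S\subset\PP^3$, and the assignment $E\mapsto D$ is dominant onto the family of plane sections; thus the planes that actually occur form a Zariski-dense open subset of the full space of planes. I would then view the modulus $j$ as a rational function on this $3$-dimensional family of planes and show it is non-constant. A non-constant rational function restricted to a dense open set remains non-constant, so this produces a genuine continuous, non-trivial variation of $j(D)$ with $E$, which is the claim.

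To see that $j$ is non-constant I argue through the singular sections. The generic plane section of $S$ is a smooth cubic curve, on which $j$ is finite-valued; on the other hand $S$, being an irreducible cubic surface, contains a line $\ell$, and any plane containing $\ell$ meets $S$ in the reducible cubic consisting of $\ell$ and a residual conic, a singular curve for which $j=\infty$. Since $j$ assumes finite values on the smooth sections and the value $\infty$ on the sections through $\ell$, it cannot be constant on the space of planes. Combined with the density statement above, this gives the non-constancy of $j(D)$ as a function of $E$, and hence the corollary.

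The main obstacle is not the non-constancy of $j$ in isolation but ensuring that the variation takes place \emph{inside} the family of discriminants coming from stable bundles, rather than in some larger and possibly irrelevant family of plane sections. This is exactly what the dominance (genericity) assertion of the previous proposition supplies: the bundles $E$ sweep out a dense open set of planes, so a non-constant function on the space of planes pulls back to a non-constant function of $E$. One should also confirm at the outset that the generic section is a nonsingular elliptic curve, so that $j$ is a bona fide modulus rather than a formal expression; this is guaranteed by the very stability of $E$, equivalently the smoothness of the corresponding plane section of the Kummer surface.
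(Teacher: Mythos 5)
Your argument is correct in substance but replaces the key step of the paper's proof with a different one. The paper disposes of the non-constancy of the modulus in one line by citing Beauville's theorem \cite{Bea1} that, apart from quadrics, the smooth hyperplane sections of a hypersurface cannot all have the same moduli; applied to the singular cubic surface $S$ this immediately gives varying plane cubics, and the rest (genericity of the sections realized by very stable $E$, and the discriminant being an invariant of the algebra) is exactly as you set it up. You instead prove non-constancy of $j$ directly, by observing that the rational function $j$ on the $\PP^3$ of planes takes finite values on smooth sections but has a pole along the pencil of planes through a line $\ell\subset S$, whose sections are reducible. This is a legitimate and more elementary route, but it needs one point made precise: for a singular section the value $j=\infty$ is only honest when the section is of nodal (semistable) type, i.e.\ the Aronhold invariant $S$ does not also vanish -- for cuspidal or worse degenerations $j$ is an indeterminate $0/0$ and your contradiction evaporates. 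So you should choose $\ell$ not passing through the node of $S$ and take a generic plane through $\ell$, so that the residual conic is smooth and meets $\ell$ transversally; then the section is $2$-nodal, $j$ genuinely has a pole there, and constancy of $j$ on the dense open set of smooth sections is contradicted. With that caveat your proof is complete; it trades the generality of Beauville's theorem for a hands-on computation specific to cubic sections.
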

\begin{proof}
From  \cite{Bea1}, apart from quadrics, smooth plane sections of a surface vary  the modulus of the curve non-trivially. In our case each section is a cubic curve isomorphic to the discriminant of the  net of conics associated to $E$. 
\end{proof} 

\begin{remark}
 Passing from the discriminant to the bundle $\End_0 E$ involved a degree 3 covering of the space $\PP^3$ of plane sections of the singular cubic surface. This fits into a classical situation:  two  threefolds, dual to each other, the Igusa quartic $B$ in $\PP^4$ and the Segre cubic in the dual projective space.  The quartic has an interpretation as  the GIT quotient of six points on a conic and a point $x$ on it represents by duality a hyperplane  section of the cubic. The six points $x_i$ give us the genus 2 curve $C$ and the singular cubic surface $S$ is  constructed, as in the proposition, by blowing up the points. Then the plane sections of $S$ correspond by duality to the lines through $x$. Projection from $x\in B$ is then a threefold covering of $\PP^3$, the space of discriminants. But the Igusa quartic is also the quotient of $\PP^3$ by  the action of $H^1(C,\Z_2)$, or equivalently the moduli space of projective bundles $\PP(E)$, so we have an analogue of Atiyah's description of the moduli space as a branched cover of $\PP^3$, at least in the very stable situation. 
\end{remark}

\section{Genus 3 curves}\label{three} 
\subsection{The vector bundles}\label{vect}
Let $C$ be a nonhyperelliptic curve of genus $3$ -- a nonsingular plane quartic in the canonical embedding. The moduli space ${\mathcal N}$ of (semi)stable rank two bundles $E$ with $\Lambda^2E$ trivial  is isomorphic to  the Coble quartic  hypersurface in $\PP^7$   \cite{Nar1}. The decomposable bundles $E=L\oplus L^*$ describe a three dimensional Kummer variety, the quotient of  the Jacobian $\Jac(C)$ by $x\mapsto -x$, which is the singular locus. The  Coble quartic is characterized by this property together with invariance by the action  on $\C^8$ of a finite Heisenberg group which corresponds to $E\mapsto E\otimes U$ for $U\in H^1(C,\Z_2)$, a line bundle with $U^2$ trivial. 

The discriminant here is a degree 6 hypersurface in $\PP(H^0(C, K^*))=\PP^5$ which is quite complicated -- as noted above the bicanonical embedding of $C$ lies in it, and the rank of the quadratic form there is at most 3, so this is a curve of singularities. We shall consider instead the family of bundles which are defined by  the fixed points in ${\mathcal N}$ of one element $U$. This is the intersection of a $\PP^3\subset \PP^7$ with the quartic hypersurface, hence a quartic surface, and we have a 2-dimensional family. 

A fixed point in the moduli space means an isomorphism $\psi:E\rightarrow E\otimes U$. We can view this as a Higgs bundle twisted with $U$ rather than the usual $K$, which implies that we have a genus 5 spectral curve $\pi:\tilde C\rightarrow C$, the unramified covering corresponding to $U$, and $E$ is the direct image $\pi_*(L\otimes \pi^*U^{1/2})$ where $\sigma^*L\cong L^*$ i.e. $L$ lies in the Prym variety. The bundle $E$ is strictly semistable if $L^2$ is trivial. 

 A section $e$ of $E=\pi_*(L\otimes \pi^*U^{1/2})$ on an open set $V\subset C$ is by definition of the direct image a section $s\in H^0(\pi^{-1}(V),L\otimes \pi^*U^{1/2})$ and then $s\sigma^*s\in H^0(V,U)$ is a nondegenerate $U$-valued quadratic form $(e,e)$ on $E$.  In the presence of this orthogonal structure, the trace zero endomorphisms $\End_0E$ split into  symmetric and skew symmetric components, so we have 
\begin{equation}\label{sum}
\End_0 E\cong U\oplus E'.
\end{equation}
The isomorphism $\psi:E\cong E\otimes U$ gives an involution on $\End_0 E$ and the above summands are the eigenspaces. Moreover 
the rank 2 bundle  $E'$ has an orthogonal structure, this time with values in the trivial bundle.  If we consider $\psi$ as a $U$-valued Higgs field on $E$, its eigenspaces in the adjoint representation  are squares of the eigenspaces on $E$. This means that $E'=\pi_*L^2$. 
 
The expression $\End_0 E\cong U\oplus E'$ is an orthogonal decomposition with respect to the   quadratic form $\tr \phi^2$ so the  map $$\tr \Phi^2:H^0(C,\End_0E\otimes K)\rightarrow  H^0(C, K^2)$$ in this case takes $(u,e)\in H^0(C,UK) \oplus H^0(C,E'\otimes K)$ to a multiple of $u^2+(e,e)$. 

We met  the first term $H^0(C,UK)$ with $C=C_3$ in the previous section, giving quadratic forms $Q_1,Q_2,Q_3$. These will appear next in a different form.

 \subsection{The net of quadrics}
In \cite{P},  Pauly associates to a rank 2 stable bundle $E$ with trivial determinant a bundle $F$ with $\Lambda^2F\cong K$ such that $\dim H^0(C,E\otimes F)=4$. It arises as follows: $E$ is defined as an extension 
$L^*\rightarrow E \rightarrow L$
where $\deg L=1$ and the extension class lies in  $ H^1(C,L^{-2})$ which has dimension $4$. This class annihilates a 3-dimensional subspace $W\subset H^0(C,L^2K)$. Then $(F\otimes L)^*$ is defined as the kernel of the evaluation map ${\mathrm{ev}}: C\times W\rightarrow L^2K$. 
In the long exact sequence 
\begin{equation} \label{long}
0\rightarrow H^0(C,F\otimes L^*)\rightarrow H^0(C,F\otimes E)\rightarrow H^0(C, F\otimes L) \stackrel{\delta} \rightarrow H^1(C,F\otimes L^*)
\end{equation}
consider first 
$H^0(C,F\otimes L^*)$. We have $(F\otimes L)^*\subset C\times W$ so since $F\cong F^*\otimes K$,  $F\otimes L^*K^*\subset C\times W$. Tensoring with $K$ gives 
$$H^0(C,F\otimes L^*)\rightarrow H^0(C,K)\otimes W\rightarrow H^0(C, K^2L^2)$$
and by Riemann-Roch $\dim H^0(C,F\otimes L^*)\ge 9-8=1$. 
The connecting homomorphism $\delta$ in the sequence (\ref{long}) is zero on $W^*\subset H^0(C, F\otimes L)$ and, if the bundle is not a so-called exceptional one, $\dim H^0(C,F\otimes L^*)=1$ and this gives  the four dimensions for $H^0(C,E\otimes F)$.
Importantly, it turns out that $F$ is independent of the choice of $L^*$, which is a maximal subbundle -- there are generically eight of these.
As shown in \cite{P}, if $E$ is stable then $F$ is stable unless $E$ is defined by a  point on the Coble quartic which lies on a trisecant  of the Kummer variety. 

The isomorphism  $\Lambda^2E\cong \OO$  defines a skew form on $E$ and $\Lambda^2F\cong K$ gives a  skew form on $F$ with values in $K$ so the tensor product  has a $K$-valued {\it symmetric} form.  Then the  4-dimensional space $V=H^0(C,E\otimes F)$  has a symmetric bilinear form $(\,,\,)$ with values in $H^0(C,K)$, geometrically  a  net of quadrics in $\PP^3=\PP(V)$. 

\subsection{The multiplicity algebra}\label{multialg}

Following on from this approach, in \cite{H} the author made use of 
the natural map 
$$\Lambda^2 (E\otimes F)\rightarrow S^2E\otimes \Lambda^2F\cong \End_0E\otimes K$$
which induces one from $\Lambda^2 H^0(C,E\otimes F)$ to $H^0(C, \End_0E\otimes K)$. Both spaces have dimension 6 and it was shown that this is an isomorphism. 

Explicitly, if 
we choose a basis $v_1,\dots, v_4$ of $V$ then the quadratic form $\tr \Phi_1\Phi_2$ with values in  $H^0(C,K^2)$ is given up to a scale by 
\begin{align*}
(v_i\wedge v_j,v_i\wedge v_k)&=(v_i,v_i)(v_j,v_k)-(v_i,v_k)(v_j,v_i)\\
(v_1\wedge v_2, v_3\wedge v_4)&=(v_1,v_3)(v_2,v_4)-(v_1,v_4)(v_2,v_3)+\sqrt{\det(v_i,v_j)}.
\end{align*}
The square root means that the equation of the quartic is $\det(v_i,v_j)-Q^2$ for some quadratic $Q(x,y,z)$. Thus $\det(v_i,v_j)=0$ is a quartic curve meeting $C$ tangentially at 8 points, and Pauly describes in more detail the rational map from the moduli space of projective bundles $\PP(E)$ to the space of tangential quartics.

In our case $E\cong E\otimes U$ and from the construction of the bundle $F$ it follows that $F\cong F\otimes U$ which means the composition of the isomorphisms defines an involution $\tau$ on $V=H^0(C,E\otimes F)$ which is orthogonal with respect to the $H^0(C,K)$-valued inner product. We also have the symmetric form on $E$ with values in $U$ which gives a skew form on $V$ with values in $H^0(C,UK)$ (this defines a homomorphism from $\Lambda^2(E\otimes F)$ to $UK$ which gives the decomposition (\ref{sum})). 

\begin{remark} The analogy was drawn in \cite{H} with the differential geometry of four dimensions expressed in terms of the two spinor bundles $S_+,S_-$ with $E,F$ playing similar roles here. In pursuit of this analogy the situation we have here is parallel to K\"ahler geometry in two complex dimensions
with the involution being parallel to the complex structure and the skew form to the K\"ahler form. 
\end{remark} 

The involution $\tau$ on $V$ induces one on $\Lambda^2V\cong H^0(C,\End_0V\otimes K)\cong H^0(C,UK)\oplus H^0(C, E'\otimes K)$. The first summand is a 2-dimensional $+1$ eigenspace, with  a 4-dimensional $-1$ eigenspace as orthogonal  complement. 
It follows that $\tau$ splits $V=V^+\oplus V^-$ into two orthogonal 2-dimensional subspaces spanned by $v_1,v_2$ and $v_3,v_4$, the $\pm 1$ eigenspaces of $\tau$, and each has a quadratic form $A^+,A^-$ . Then $\det (v_i,v_j)=\det A^+\det A^-$ so the equation of the quartic is $\det A^+\det A^- - Q^2=0$ -- the familiar expression $Q_1Q_2=Q^2$ associated to $U$ as in the first section. 

In Pauly's description of the moduli space of projective bundles on $C$ using tangential quartic curves, the case we are considering is where the tangential quartic is a pair of conics, defined by two sections of $KU$. 

\subsection{Explicit forms} \label{explicit}
Take a basis $v_1,v_2,v_3,v_4$ as above and write $b_{ij}=(v_i,v_j)$,  sections of $K$,  and take a corresponding  basis $v_{23},v_{31},v_{12},v_{41},v_{42},v_{43}$ where $v_{ij}=v_i\wedge v_j$   for the six-dimensional exterior product $\Lambda^2V\cong H^0(C,\End_0 E\otimes K)$.   
The involution acts as $1$ on $v_{12},v_{34}$ and $-1$ on the others.  Then the matrix of the quadratic form $\tr \Phi^2$ with values in $H^0(C,K^2)$ 
 is 
\begin{equation}\label{mat} 
\begin{bmatrix} b_{22}b_{33} & -b_{12}b_{33} & 0 &Q -b_{12} b_{34} & -b_{22}b_{34} & 0\\
-b_{12}b_{33} & b_{11}b_{33} & 0 &  b_{11}b_{34} & Q+b_{12}b_{34} & 0\\
0 & 0 & Q_1 & 0 & 0 &Q \cr
Q -b_{12} b_{34}&b_{11}  b_{34}& 0 &b_{11}  b_{44}&  b_{12}b_{44} & 0 \\
-b_{22}b_{34} &Q+ b_{12}b_{34}& 0 & b_{12}b_{44} & b_{22} b_{44}& 0\\
0 & 0 & Q & 0 & 0 & Q_2
\end{bmatrix}
\end{equation} 
where $Q_1=\det A^+=b_{11}b_{22}-b_{12}^2, Q_2=\det A^-=b_{33}b_{44}-b_{34}^2$.  The discriminant is given by the vanishing of this determinant.


\begin{remark}To explain the  roles of quadratic forms in this picture,  recall that the discriminant lies in $\PP(H^1(C,K^*))$ and by Serre duality $H^1(C,K^*)$ consists of linear functions on $H^0(C,K^2)$ and the latter are quadratic expressions in $x,y,z\in H^0(C,K)$. Then $x^2,y^2, z^2, xy \dots$  as sections of $H^0(C,K^2)$ are to be regarded as linear functions on its dual  $H^1(C,K^*)$. 
\end{remark}

It is clear from $(\ref{mat})$ that $Q_1Q_2-Q^2$ is a quadratic factor of the determinant and this gives a singular quadric whose degeneracy subspace is the $\PP^2$ defined by the linear forms $Q_1=Q_2=Q=0$. The other component is a quartic hypersurface $X$. Then $X\cap \PP^2$ is a quartic curve whose modulus is an invariant of the algebra. 

We look at this slightly differently. 
The intersection of  the hypersurface $X$ with the $\PP^3$ defined by $Q_1=Q_2=0$ is a surface given by the determinant of the $4\times 4$ matrix obtained from (\ref{mat}) by deleting rows and columns 3 and 6. Its intersection with $Q=0$ is the determinant of this matrix where we set $Q=0$ also. This is 
\begin{equation}\label{mat1} 
\begin{bmatrix} b_{22}b_{33} & -b_{12}b_{33}  & -b_{12} b_{34} & -b_{22}b_{34} \\
-b_{12}b_{33} & b_{11}b_{33}  &  b_{11}b_{34} & b_{12}b_{34} \\
-b_{12} b_{34}&b_{11}  b_{34} &b_{11}  b_{44}&  b_{12}b_{44}  \\
-b_{22}b_{34} & b_{12}b_{34} & b_{12}b_{44} & b_{22} b_{44}\\
\end{bmatrix}.
\end{equation} 
which now depends only on the two conics  and not on the genus 3 curve $C$. It is the inner product on the 4-dimensional anti-invariant part of $\Lambda^2V$ induced from the inner product on $V$. 
Its determinant defines a quartic surface $S\subset \PP^3$ and the curve $X\cap \PP^2$ we are seeking is its intersection with the plane $Q=0$.

\begin{remark} The determinant of this is to be understood as a  polynomial in the quadratics in $x,y,z$ without imposing the relations $(xy)(yz)=(xz)(y^2)$ etc. Those relations define a Veronese surface in $\PP^5$ and the determinant is then $(Q_1Q_2)^2$. 
\end{remark}

To obtain a formula  we take a generic pair $Q_1=x^2-y^2-z^2,  Q_2=a^2x^2-b^2y^2-z^2$  or equivalently we take the quadratic form on $V\cong \C^4$ to be 
\begin{equation}
\begin{bmatrix} A^+ & 0\\
0 & A^-
\end{bmatrix}=
\begin{bmatrix} x+y & z & 0 & 0\\
                              z & x-y & 0 & 0\\
                              0 & 0 & ax+by & z\\
                              0 & 0 & z & ax-by
                              \end{bmatrix} \label{4mat}
                              \end{equation} 
From this point of view the six parameters in the choice of $Q(x,y,z)$ provide the $3g-3=6$ degrees of freedom for the genus 3 curve $C$:  $(x^2-y^2-z^2) (a^2x^2-b^2y^2-z^2)=Q^2$. The extra parameters 
$a,b$ yield the 2-dimensional family of vector bundles but, from the  equation of the curve, they are interrelated.

The $\PP^3$ is given by $Q_1=0=Q_2$ so we use the relations  $x^2-y^2-z^2=0, a^2x^2-b^2y^2-z^2=0$ to give homogeneous coordinates  $w_1=yz, w_2=zx, w_3=xy, z^2=(a^2-b^2)w_0$  and then the matrix (\ref{mat1}) is (with $c=(a-b)(1+ab), d=(a+b)(1-ab)$)
 
\begin{equation}\label{mat2}
M=\begin{bmatrix}
cw_0-(a-b)w_3 & -bw_1-aw_2 & (b^2-a^2)w_0 & w_1-w_2\\
-bw_1-aw_2 & dw_0 + (a+b)w_3 & w_1+w_2 & (a^2-b^2) w_0\\
(b^2-a^2) w_0& w_1+w_2 & cw_0+(a-b)w_3 & -bw_1+aw_2\\
w_1-w_2 & (a^2-b^2)w_0 & -bw_1+aw_2 & d w_0-(a+b) w_3
\end{bmatrix} 
\end{equation} 
Incorporating terms from the third and sixth rows and columns of (\ref{mat}) this yields the following explicit formulas for the algebra. It is 
 generated by $1, \xi_1,\xi_2,\xi_3,\eta_1,\eta_2,\eta_3$ subject to the following  relations, where $\xi\mathbf{\cdot} \eta=\xi_1\eta_1+\xi_2\eta_2+\xi_3\eta_3$: 
 \newpage
$$(a-b)(1+ab)(\xi_1^2+\eta_1^2)+(a+b)(1-ab)(\xi_2^2+\eta_2^2) +2(a^2-b^2)(\xi_1\eta_1-\xi_2\eta_2)+ a_1 \xi\mathbf{\cdot} \eta$$
$(\xi_1\eta_2-\xi_2\eta_1)+a(\xi_1\xi_2-\eta_1\eta_2)+a_2 \xi\mathbf{\cdot} \eta,\qquad  
(\xi_1\eta_2+\xi_2\eta_1)-b(\xi_1\xi_2+\eta_1\eta_2)+a_3 \xi\mathbf{\cdot} \eta$
$$(a+b)(\xi_2^2-\eta_2^2) -(a-b)(\xi_1^2-\eta_1^2)+ a_4 \xi\mathbf{\cdot} \eta, 
\qquad\xi_3^2+a_5 \xi\mathbf{\cdot} \eta,\qquad
\eta_3^2+a_6 \xi\mathbf{\cdot} \eta.$$
Just as the formulas for genus 2 in Section \ref{even} were of little use in determining the isomorphism class of the algebra,   in this case we consider a more degenerate version and observe that our family consists of a continuous  deformation of  it.

\subsection{A special case}\label{special} 

The first algebra to be calculated using this method is in \cite{H} where the relations  have the simple form
\begin{equation}\label{rel0}
\xi_i^2=a_i(\xi_1\eta_1+\xi_2\eta_2+\xi_3\eta_3),\qquad \eta_i^2=b_i(\xi_1\eta_1+\xi_2\eta_2+\xi_3\eta_3)
\end{equation} 
This was based on taking $Q_1=xy, Q_2=z(x+y+z)$ two degenerate quadratic forms. The discriminant is then given by three singular quadrics and the algebra  may be regarded as a deformation of the relations $\xi_i^2=\eta_i^2=0$ for the cohomology of $H^*((\CP^1)^6,\C)$. This is a continuous variation in the isomorphism class of the algebra, but the parameters describe a variation of the curve $C$ rather than the bundle $E$. In fact, if we take the approach of Section \ref{vect} then
the bundle $E'$ here corresponds to taking $L^2=\pi^*U_1$  where $U_1^2$ is trivial, hence for a given curve there are only finitely many bundles of this type.

In this case we have $\End_0 E\cong U\oplus UU_1\oplus U_1$. We can check if $E$  is very stable by looking for base points of the family of quadrics, that is non-trivial solutions to (\ref{rel0}), but these only exist if  
$\sqrt{a_1b_1}+\sqrt{a_2b_2}+\sqrt{a_3b_3}=1$. So for a generic curve $C$ a bundle of this type is very stable. 

Our family of vector bundles is obtained by direct image from the Prym variety by deforming $L$ away from $U_1$ and so we can say already that a generic member will be very stable. 
\begin{remark} In \cite{PP} the authors show that the ``wobbly" bundles are cut out by a hypersurface in $\PP^7$ of degree $48$.
\end{remark} 

For this special case the quartic curve $X\cap \PP^2$ is a pair of conics. To show a variation in the modulus it is enough to show that as $a,b$ vary we get an irreducible curve. But by analyzing the surface $S$ we get rather more. 

\section{The quartic surface $S$}
\subsection{Plane sections} 
A variety such as $S$ ($\det M=0$ from (\ref{mat2})) which is defined by the determinant of a {\it symmetric} matrix of linear forms in $n$ variables  has singularities when $n>3$.  These  hypersurfaces are classically known as symmetroids (see e.g. \cite{Dol}). For a quartic surface the generic symmetroid has  10 nodes. 

To find the singular locus in our case first note that the plane section $w_0=0$ gives a quartic curve
$$(b^2-1)^2w_1^4+((a^2-1)w_2^2+(a^2-b^2)w_3^2)^2-2(b^2-1)w_1^2((a^2-1)w_2^2+(b^2-a^2)w_3^2)=0$$
which is reducible to a pair of conics:
$$(b^2-1)w_1^2=(a^2-1)w_2^2-(a^2-b^2)w_3^2\pm 2\sqrt{(1-a^2)(a^2-b^2)}w_2w_3.$$
These meet where $w_2=0$ or $w_3=0$.

If $w_0=w_3=0$ $\sqrt{b^2-1}w_1=\pm\sqrt{a^2-1}w_2$ is a singular point of this curve. The matrix is then 
$$\begin{bmatrix} 0 & -bw_1-aw_2 & 0 & w_1-w_2\\
-bw_1-aw_2 & 0 & w_1+w_2 & 0\\
0 & w_1+w_2 & 0 & -bw_1+aw_2 \\
w_1-w_2 & 0 & -b w_1+aw_2 & 0
\end{bmatrix}$$
and two pairs of rows are linearly dependent so the matrix has rank $\le 2$. This is a singular point of $S$. 

Calculating the Hessian of the quartic function $\det M$ at this point one sees that unless $a^2=b^2$ or $(a^2-1)(b^2-1)=1$ the singularity is a node, and similarly at the other point $x_0=x_2=0$. Moreover, at no other points on the curve $w_0=0$ is the rank of the matrix $\le 2$, so this  means that $S$ has no codimension 1 singularities. Then a general plane section is a smooth quartic curve. 

\begin{prp} Let ${\mathcal N}$ be the moduli space of  rank 2 semi-stable bundles $E$ with trivial determinant on a non-hyperelliptic genus 3 curve $C$ and $U$ a line bundle with $U^2$ trivial.  Then a generic bundle with $E\cong E\otimes U$ is very stable,  and the isomorphism class of the multiplicity algebra varies non-trivially in this two-dimensional family, for a generic curve $C$. 
\end{prp}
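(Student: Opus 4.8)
The plan is to prove the two assertions in turn: very stability by an openness argument, and the non-triviality of the variation through the geometry of the quartic symmetroid $S$.

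For very stability I would argue as follows. Recall that $E$ is very stable precisely when the associated family of quadrics has empty base locus, equivalently when there is no nonzero $\Phi$ with $\tr\Phi^2=0$; this is an open condition on the family, since the wobbly locus is closed (indeed cut out by a degree $48$ hypersurface by \cite{PP}). The two-dimensional family in question is $\{E=\pi_*(L\otimes\pi^*U^{1/2})\}$ parametrized by the Prym variety, so it suffices to exhibit a single very stable member. The decomposable locus $L^2=\pi^*U_1$ of Section \ref{special}, where $\End_0 E\cong U\oplus UU_1\oplus U_1$, provides finitely many such members in every fibre, and for these the base locus is nonempty only when $\sqrt{a_1b_1}+\sqrt{a_2b_2}+\sqrt{a_3b_3}=1$, a relation that fails for a generic curve. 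Hence for generic $C$ there is a very stable member, and openness gives very stability for the generic bundle in the family.

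For the variation of the isomorphism class I would use the invariant isolated in Section \ref{explicit}: the discriminant factors as the singular quadric $Q_1Q_2-Q^2$ and a residual quartic hypersurface $X$, and the projective-equivalence class of the plane quartic curve $X\cap\PP^2=S\cap\{Q=0\}$ is an invariant of the algebra, where $S\subset\PP^3$ is the symmetroid $\det M=0$ of (\ref{mat2}). The point is that $S=S_{a,b}$ depends only on the pair of conics, so as the bundle varies over the Prym we obtain a family of plane sections of varying symmetroids. The preceding analysis of $S$ shows that, away from $a^2=b^2$ and $(a^2-1)(b^2-1)=1$, the surface is a nodal symmetroid with no codimension-one singular locus, whence a general plane section is a smooth quartic curve of genus $3$. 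At the decomposable points above, by contrast, the net of conics degenerates and $X\cap\PP^2$ is a pair of conics. Thus the modulus of $X\cap\PP^2$ takes the degenerate value of a reducible curve at the special members and the modulus of a smooth plane quartic at the generic ones, so it is non-constant on the family; invoking \cite{Bea1}, which guarantees that smooth plane sections of a surface other than a quadric vary the modulus non-trivially, the variation is continuous and the isomorphism class of the multiplicity algebra genuinely moves.

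I expect the main obstacle to be the genericity step, namely passing from the general-position statements about plane sections of $S$ to the specific sections cut out by our family. The plane $\{Q=0\}$ is not arbitrary: it is tied to the curve through $(x^2-y^2-z^2)(a^2x^2-b^2y^2-z^2)=Q^2$, so as the bundle varies both the symmetroid $S_{a,b}$ and the cutting plane move, and one must ensure that the sections actually realized are generic enough to be smooth and irreducible rather than being forced to constant modulus by the constraint. The structural input that makes this work is the absence of codimension-one singularities on $S$, which guarantees an abundance of smooth sections, combined with the reducible-versus-irreducible dichotomy between the decomposable members and the generic ones; it is this comparison, rather than any explicit computation with the formula, that forces the modulus to be non-constant.
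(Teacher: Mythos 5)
Your proposal is correct and follows essentially the same route as the paper: very stability via the special bundles of Section \ref{special} (very stable for generic $C$) together with openness of the very stable locus, and non-trivial variation via the dichotomy between the smooth plane quartic $X\cap\PP^2$ at a generic member and the reducible pair of conics at the special members of the same connected family. The genericity subtlety you flag --- that the cutting plane $Q=0$ is constrained by $Q_1Q_2-Q^2=0$ rather than arbitrary --- is real and is exactly the point the paper also treats only by a genericity argument; your appeal to \cite{Bea1} is not needed here (the smooth-versus-reducible comparison already forces the modulus to move), and the paper reserves that reference for the genus~2 corollary.
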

 \begin{proof}
In Section \ref{explicit} we showed using \cite{P} that such a bundle $E$ is defined up to tensoring by a line bundle by a pair of tangential conics. A generic pair can be simultaneously diagonalized and hence defined by equations $Q_1(x,y,z)=x^2-y^2-z^2=0, Q_2(x,y,z)=a^2x^2-b^2y^2-z^2=0$. We calculated the discriminant of the family of quadrics to be a singular quadric and a quartic hypersurface in $\PP^5$.  The projective equivalence class of the curve of intersection of the degeneracy plane of the quadric with the quartic is an invariant of the isomorphism class of the multiplicity algebra. 

We identified the quartic curve as the intersection of a quartic surface $S$ with a hyperplane in $\PP^5$ defined by the conic $Q$ which gives $Q_1Q_2-Q^2=0$ as the equation of $C$. A generic section is a smooth quartic curve, but the special bundles 
 in  Section \ref{special}, which are  very stable for generic $C$, belong to the connected family of bundles $E$ considered. This family thus defines quartic curves which are both smooth and reducible and hence vary in modulus.
\end{proof} 
\subsection{ Ten singularities} 
Going back to the quartic surface $S$, by rescaling, the roles of $x,y,z$ interchange,  and since  $w_0$ corresponds to the $z^2$ term and $w_3$ to $xy$, we obtain nodes  by the method above for the pairs $(y^2, zx)$ and $(x^2, yz)$. This yields six singular points. 

To find the other four for the symmetroid, consider the intersection of the $\PP^3$ given by $Q_1=0=Q_2$ with the Veronese surface. As an element of the dual space of quadratic polynomials in $x,y,z$, a point on the Veronese corresponds to  evaluation at some point $(a,b,c)\in\C^3$. So the intersection with $\PP^3$ is in this case the four points of intersection $a_1,a_2,a_3,a_4$ of the conics $x^2-y^2-z^2=0, a^2x^2-b^2y^2-z^2=0$. In homogeneous coordinates these are $(x,y,z)=(\pm\sqrt{1-b^2},\pm \sqrt{1-a^2},\pm \sqrt{a^2-b^2})$. The symmetric matrix on $V$ given by (\ref{4mat}) then has rank $2$ and correspondingly  the inner product on the anti-invariant part of $\Lambda^2V$ (which is the matrix (\ref{mat2})) has rank  $\le 2$. This is thus a singular point of the determinantal surface $S$. 

The line in $\PP^3$ joining evaluation at $a_1=(\sqrt{1-b^2}, \sqrt{1-a^2}, \sqrt{a^2-b^2})$ to evaluation at $a_2=(\sqrt{1-b^2}, \sqrt{1-a^2}, -\sqrt{a^2-b^2})$ meets the plane $z^2=0$ in the singularity evaluated in the previous section, so we see that the ten nodes are labelled by the four points of intersection of two conics and the six lines joining them.

\section*{Acknowledgments}
The author wishes to thank Tam\'as Hausel for raising this question and ICMAT for support,  and in particular Oscar Garc\'ia-Prada for his continuing contributions to research into the geometry of vector bundles on curves.

\vskip 1cm
\centerline{Mathematical Institute, Woodstock Rd, Oxford OX2 6GG, UK}

\end{document}